\documentclass[11pt]{article}
\usepackage{amsmath,amsfonts,latexsym,amssymb,amsthm}
\usepackage{url, verbatim}

\newcommand{\Q}{\mathbb{Q}}

\newcommand{\Z}{\mathbb{Z}}

\begin{document}

\title{Torsion of elliptic curves over cubic fields}
\author{Filip Najman}
\date{}
\maketitle
\begin{abstract}
Although it is not known which groups can appear as torsion groups of elliptic curves over cubic number fields, it is known which groups can appear for infinitely many non-isomorphic curves. We denote the set of these groups as $S$. In this paper we deal with three problems concerning the torsion of elliptic curves over cubic fields. First, we study the possible torsion groups of elliptic curves that appear over the field with smallest absolute value  of its discriminant and having Galois group $S_3$ and over the field with smallest absolute value of its discriminant and having Galois group $\Z/3\Z$. Secondly, for all except two groups $G\in S$, we find the field $K$ with smallest absolute value of its discriminant such that there exists an elliptic curve over $K$ having $G$ as torsion. Finally, for every $G\in S$ and every cubic field $K$ we determine whether there exists infinitely many non-isomorphic elliptic curves with torsion $G$.
\end{abstract}
\textbf{Keywords} Torsion Group, Elliptic Curves, Cubic Fields\\
\textbf{Mathematics Subject Classification} (2010) 11G05, 11G18, 11R16, 14H52
\section{Introduction.}
The possible torsion groups of an elliptic curve over the rational numbers are known by a theorem of Mazur, who actually gave two different proofs of the theorem \cite{maz1} and \cite{maz2}. These groups are:
$$\Z/n\Z,\ n=1,\ldots, 10,12,$$
$$\Z/2\Z\oplus \Z/2n\Z,\ n=1,\ldots,4.$$

In a similar manner all the possible torsion groups over the collection of all quadratic fields were determined by Kenku and Momose \cite{km} and Kamienny \cite{Kam1}. The author determined the possible torsion groups over the quadratic cyclotomic fields $\Q(i)$ and $\Q(\sqrt{-3})$ \cite{Naj1, Naj2}, which are also the two quadratic fields with the smallest discriminant.

It was proven by Parent \cite{Par1, Par2} that $13$ is the largest prime that can divide the order of the torsion of an elliptic curve over a cubic field. Jeon, Kim and Schweizer \cite{jks} found all the groups that appear as torsion for infinitely many non-isomorphic curves over cubic fields. These are the following groups:

$$\Z/n\Z,\ n=1,\ldots, 16,18,20,$$
$$\Z/2\Z\oplus \Z/2n\Z,\ n=1,\ldots7$$

We denote the set of the above groups as $S$. Unfortunately, it is not known whether there are other groups that appear as torsion for only finitely many curves. For a cubic field $K$, we will denote by $T(K)$ the set of all groups that appear as torsion of an elliptic curve over $K$ but are not contained in the set $S$. Note that the union of all $T(K)$ over all cubic fields is still a finite set, and one could use \cite{Par3} to give an effective bound. But this bound would be huge and would not be of help for practical purposes.

Since over the rationals and over quadratic fields all the groups that appear as torsion, do so for infinitely many curves, the possibility that $T(K)$ is empty for all cubic fields is not unreasonable.

In this paper we focus more on the torsion of elliptic curves over a single cubic field (as opposed to looking at all cubic fields simultaneously). We deal with three problems:

1) Determine the possible torsion groups of an elliptic curve over the field with smallest discriminant and having Galois group $S_3$ and over the field with smallest discriminant and having Galois group $\Z/3\Z$.

2) Find for every group from $S$ the field with the smallest discriminant having it as a torsion of an elliptic curve,

3) Determine for how many non-isomorphic curves does each of the groups from $S$ appear as torsion for any fixed cubic field $K$.

Note that a similar problem to 1) was solved by the author in \cite{Naj1,Naj2} for quadratic fields. The analogues of 2) and 3) for quadratic fields were dealt with successfully by Kamienny and the author in \cite{kn}.

We succeed in 1) under the assumptions that $T(K)$ is empty for these fields $K$. We also eliminate some possibilities for $T(K)$.

We succeed in 2) for all the groups except for $\Z/20\Z$ and $\Z/2\Z \oplus \Z/14\Z$. The reason we fail for these groups is that the modular curves $X_1$ corresponding to these groups are non-hyperelliptic of genus 3 and 4, and thus it is extremely hard to determine the existence of a cubic point on them.

We solve problem 3) completely. When counting curves throughout this paper, we will always count them up to isomorphisms.

Although we follow the general strategy of \cite{kn} and \cite{Naj2} to handle these problems, since the problems are much harder over cubic than over quadratic fields, we will use various other methods, not previously used in \cite{kn,Naj2}.

\section{Torsion over cubic fields with smallest discriminant}
As we will be mentioning certain cubic fields many times in the remainder of the paper, for the convenience of the reader we list the first seven cubic fields (taken from \cite{jj}) ordered by ascending absolute value of the discriminant in the table below. In the table, $i$ is the position of the field in the table, $\Delta$ the discriminant of the field, $G$ its Galois group (the Galois group of its normal closure, to be precise) and in the last column in the table is the generating polynomial of the field.

\begin{center}
\begin{tabular}{|c|c|c|c|}
\hline
$i$ & $\Delta$ & $G$ & Polynomial\\
\hline
1 & $-23$ & $S_3$ & $x^3 - x^2 + 1$\\
\hline
2 & $-31$ & $S_3$ & $ x^3+x-1$\\
\hline
3 & $-44$ & $S_3$ & $x^3 - x^2 + x + 1$\\
\hline
4 & 49 & $\Z /3\Z$ & $x^3 - x^2 - 2x + 1$\\
\hline
5 & $-59$ & $S_3$ & $x^3 + 2x - 1$\\
\hline
6 & $-76$ & $S_3$ & $x^3 - 2x - 2$\\
\hline
7 & 81 & $\Z /3\Z$ & $x^3 - 3x - 1$\\
\hline

\end{tabular}
\medskip

\textbf{Table 1.}

\end{center}

We will denote the $i$-th field in the table as $K_i$. We will focus on two fields, $K_1$ and $K_4$. The field $K_1$ is the cubic field with the smallest discriminant, while $K_4$ is both the field with smallest discriminant having $\Z/3\Z$ as a Galois group and the totally real field with smallest discriminant. Note that both fields have class number 1 and that $K_4$ is the maximal real subfield of the cyclotomic field $\Q (\zeta_7)$. By $\alpha_i$ we denote the element generating $K_i$.

Let $K$ be a number field. Denote by $Y_1(m,n)$ the affine modular curve whose $K$-rational points classify isomorphism classes of triples $(E, P_m, P_n)$, where $E$ is an elliptic curve (over $K$) and $P_m$ and $P_n$ are torsion points (over $K$) which generate a subgroup isomorphic to $\Z/m \Z \oplus \Z/n \Z$. For simplicity,  we write $Y_1(n)$ instead of $Y_1(1,n)$. Let $X_1(m,n)$ be the compactification of the curve $Y_1(m,n)$ obtained by adjoining its cusps.

Similarly, let $Y_0(N)$ be the affine curve whose $K$-rational points classify isomorphism classes of pairs $(E,C)$, where $E$ is an elliptic curve and $C$ is a cyclic $Gal(\overline{K}/K)$-invariant subgroup of $E(\overline{K})$ of order $N$, or a \emph{$N$-cycle}. We obtain $X_0(N)$ by adjoining the cusps to $Y_0(N)$.

Nice models of the curves $X_0(n)$ and $X_1(n)$ can be found for example in \cite{Baa, Yang}, while the curves $X_1(2,10)$ and $X_1(2,12)$ can be found in \cite{Rab}.

We will use \emph{division polynomials} in many places in this paper. For a definition and more information on division polynomials see \cite{was}. We denote by $\psi_n$ the $n$-th division polynomial, which satisfies that, for a point $P$ on an elliptic curve in Weierstrass form, $\psi_n(x(P))=0$ if and only if $nP=0$. Note that for even $n$ one has to work with $\psi_n/\psi_2$ to get a polynomial only in one variable.

We do our rank and torsion computations on elliptic curves and Jacobians of genus 2 curves throughout this paper using MAGMA.

We first prove a useful lemma.

\newtheorem{tm}{Lemma}
\begin{tm}
If the torsion group of an elliptic curve $E$ over $\Q$ has a nontrivial 2-Sylow subgroup, then over any number field of odd degree the torsion of $E$ will have the same 2-Sylow subgroup as over $\Q$.
\label{lem1}
\end{tm}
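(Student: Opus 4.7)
The plan is to prove by induction on $n \geq 1$ that $E(K)[2^n] = E(\mathbb{Q})[2^n]$; taking $n$ large enough recovers the full 2-Sylow subgroup. This reduces everything to handling one layer of the 2-power torsion tower at a time.

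For the base case $n=1$: since $E(\mathbb{Q})[2]$ is nontrivial by hypothesis, the image of the mod-2 Galois representation $G_\mathbb{Q} \to \mathrm{GL}_2(\mathbb{F}_2) \cong S_3$ fixes a nonzero vector and so has order at most $2$. Consequently $\mathbb{Q}(E[2])/\mathbb{Q}$ has degree $1$ or $2$, and since $[K:\mathbb{Q}]$ is odd, $K$ cannot contain that quadratic subfield. Hence $E(K)[2] = E(\mathbb{Q})[2]$.

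For the inductive step, assume $E(K)[2^{n-1}] = E(\mathbb{Q})[2^{n-1}]$ and take $P \in E(K)[2^n]$. Then $2P \in E(\mathbb{Q})$, so for every $\sigma \in G_\mathbb{Q}$ we have $\sigma P - P \in E[2]$, i.e.\ the Galois orbit of $P$ lies inside the 4-element coset $P + E[2]$. The orbit size $[\mathbb{Q}(P):\mathbb{Q}]$ divides $[K:\mathbb{Q}]$, hence is odd, so it is either $1$ or $3$. If the orbit were $\{P, P+a, P+b\}$ with $a, b$ distinct nonzero elements of $E[2]$, then summing yields $3P + a + b \in E(\mathbb{Q})$; subtracting $2P \in E(\mathbb{Q})$ gives $P + a + b \in E(\mathbb{Q}) \subseteq E(K)$, whence $a + b \in E(K)[2]$. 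By the base case $a + b \in E(\mathbb{Q})$, and therefore $P = (P + a + b) - (a+b) \in E(\mathbb{Q})$, contradicting the orbit having size $3$. So the orbit has size $1$, meaning $P \in E(\mathbb{Q})$, which completes the induction.

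The main obstacle is the inductive step: the Galois orbit of $P$ sits inside a 4-element set, and odd-degree alone leaves open the orbit size $3$. The trick is the Galois-invariant orbit sum, which combined with the rational point $2P$ and the already-established base case forces $a+b$ to be rational and collapses the orbit back to a single point.
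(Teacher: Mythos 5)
Your proof is correct and takes essentially the same approach as the paper's: your base case is the paper's observation that the full $2$-torsion field is at most quadratic (hence not contained in an odd-degree field), and your inductive step is the paper's analysis of Galois orbits inside the $4$-element fibre of multiplication-by-$2$ over a rational $2$-power torsion point. The only difference is that you supply the orbit-sum argument ruling out an orbit of size $3$, a step the paper dismisses as ``easy to see.''
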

\begin{proof}
Let $K$ be a number field of odd degree. If $E(\Q)\simeq \Z/2nZ$, then the rest of the 2-torsion is defined over a quadratic field and hence not over $K$. So if the $2$-Sylow group increases there must be a $K$-rational (but not $\Q$-rational) point $P$ such that $2P=Q$, where $Q\in E(\Q)$ is a nontrivial torsion point whose order is a power of 2. But for fixed $Q$ the equation $2P=Q$ has exactly 4 solutions and it is easy to see that the orbits under the action of $Gal(\overline{\Q}/\Q)$ can only have lengths of 2 or 4.
\end{proof}

\newtheorem{lem3}[tm]{Theorem}
\begin{lem3}
Let $T$ be a torsion group from Mazur's theorem and $K$ a cubic number field. There exist infinitely many elliptic curves with torsion $T$ over $K$.
\label{lem3}
\end{lem3}
\begin{proof}
First note that by \cite[Theorem 3.2]{jks} there are infinitely many elliptic curves over $\Q$ with each of the torsion groups from Mazur's theorem.

Let $K$ be a fixed cubic field. Let $E$ be an elliptic curve defined over $\Q$, such that $E(\Q)_{tors}\simeq \Z/2\Z$. By Lemma \ref{lem1}, $E(K)_{tors}\simeq \Z /2n\Z$, where $n$ is odd. From \cite[Lemma 5.5]{mr} only finitely many quadratic twists of $E(K)$ have any odd-order torsion and (since twisting does not change the 2-torsion) hence all but finitely many twists will have torsion $\Z/2\Z$. In exactly the same manner one proves that there are infinitely many curves with $E(K)_{tors}\simeq \Z/2\Z \oplus \Z/2\Z$.

The only groups apart of the ones from Mazur's theorem that can a priori appear as the torsion of infinitely many elliptic curves over $K$ are the ones such that the corresponding modular curve has genus $\leq 1$ and that they appear on the list in \cite[Theorem 3.4]{jks}. One can see that these are the groups $\Z/11 \Z$, $\Z/ 14\Z$, $\Z /15 \Z$, $\Z /2\Z \oplus \Z /10\Z$ and $\Z/2\Z \oplus \Z/12\Z$.

Let $E$ be an elliptic curve over $\Q$ with torsion $T=\Z /2\Z \oplus \Z/ 6\Z$ or $\Z/2\Z \oplus \Z/8\Z$. Then by Lemma \ref{lem1}, the 2-Sylow subgroups of $E(\Q)$ and $E(K)$ are equal. Thus the torsion of $E(K)$ is larger than $E(\Q)$ for only finitely many rational elliptic curves satisfying $E(\Q)_{tors}\simeq T$.

 By invoking Lemma \ref{lem1} when needed, it also follows that there are only finitely many elliptic curves defined over $\Q$ with torsion $\Z/ n\Z$, where $n=6,8,9,10$ or $12$ whose torsion becomes larger in $K$.

 Take $E(t)$ to be the family of rational elliptic curves
 $$E(t):y^2+xy+(5t+3)y=x^3+(5t+3)x^2,$$
 $t\in \Z$, with 4-torsion over $\Q$. All elliptic curves in this family have good reduction at 5, and $E(t)(\mathbb F_5)\simeq \Z/4 \Z$ for all $t\in \Z$. This shows that $E(t)(\Q)$ cannot have any 8-torsion, 12-torsion or torsion containing $\Z /2\Z \oplus \Z /4\Z$, for all $t\in \Z$. As the 3-division polynomial of $E(t)$ is an irreducible degree 4 polynomial (over $\Q (t)$), by Hilbert's irreducibility theorem, there are infinitely many $t$ such that $E(t)(K)$ has no 3-torsion. Since by Lemma \ref{lem1}, the 2-Sylow subgroups of $E(t)(\Q)$ and $E(t)(K)$ are equal, we have proved that there are infinitely many $E(t)$ with torsion group $\Z /4\Z$ over $K$.

 Let $E(t)$ be the family of rational elliptic curves
 $$E(t):y^2+5txy+(5t-1)y=x^3+(5t-1)x^2,$$
$t\in \Z$, with 5-torsion over $\Q$. All elliptic curves in this family have good reduction at 5, and $E(t)(\mathbb F_5)\simeq \Z/5 \Z$ for all $t\in \Z$. This rules out the possibility of 10-torsion in $E(\Q)$. If $E(t)(K)$ had any 3-torsion, it would inject into the residue field of (a prime over) 5. But $|E(t)(\mathbb F_{125})|=140$, implying that there is no 3-torsion in the residue field of (a prime over) 5, whatever the splitting behavior of 5 in $K$. This implies that $E(t)(K)$ has no 15-torsion, for all $t\in \Z$.
If $E(t)$ is written in short Weierstrass form $y^2=x^3+a(t)x+b(t)$, then the discriminant of $x^3+a(t)x+b(t)$ is a degree 7 polynomial in $t$. This implies that there are infinitely many values $t$ such that this polynomial generates a totally real cubic field and infinitely many values for which the same polynomial generates a complex cubic field. This now implies that infinitely many $E(t)$ will not have any 2-torsion in $K$.

A similar argumentation proves that infinitely many rational elliptic curves $E$ with $E(\Q)_{tors}\simeq \Z /7\Z$ will have no 2-torsion in $K$ and thus have $E(K)_{tors}\simeq \Z / 7\Z$.

We take the families of elliptic curves with 3-torsion
$$E_i(t): y^2 + ((-1)^i10t+2)xy +(10t+1)y=x^3,$$
 where $i=1$ or $2$ and $t$ is a non-zero integer. Note that all curves from both families have good reduction at 5, and $E_i(t)(\mathbb F_5)\simeq \Z / 3\Z$, and hence $E_i(t)(\Q)$ has no 2-torsion or 9-torsion. Thus all curves from these two families have torsion group isomorphic to $\Z /3\Z$. If $K$ is complex, then we choose $E_2(t)$ and if $K$ is totally real, we choose $E_1(t)$. The discriminant of $E_i(t)$ will then be of opposite sign than the discriminant of $K$ (implying that there is no 2-torsion in $E_i(t)(K)$). All curves from both families have good reduction at (a prime over) 2, and have either 3 or 9 points in the residue field of (the prime over) 2. This rules out the possibility of 15-torsion in any $E_i(t)(K)$. By factoring the 9-division polynomial of $E_i(t)$ and taking out the factors belonging to the 3-division polynomial, we are left with factors of degree larger than $9$ (as polynomials over $\Q(t)$). This means that, by Hilbert's irreducibility theorem, for infinitely many values $t$, there will be no $9$-torsion in $E_i(t)(K)$.

 Let $E(t)$ be the family of rational elliptic curves with
 $$E(t):y^2+xy-((5t+2)^2-\frac{1}{16})y=x^3-((5t+2)^2-\frac{1}{16})x^2,$$
 where $t\in \Z$, containing $\Z /2\Z \oplus \Z /4\Z$ as a torsion subgroup over $\Q$. All curves in the family have good reduction at 5, and all curves have 8 points in $\mathbb F_5$. This proves that none of the curves have torsion $\Z /2\Z \oplus \Z /8\Z$ and that $\Z /2\Z \oplus \Z /4\Z$ is the whole torsion group over $\Q$. The 3-division polynomial is irreducible (over $\Q(t)$) and by Hilbert's irreducibility theorem, there are infinitely many curves $E(t)$ that have no 3-torsion over any cubic field. Finally, after applying Lemma \ref{lem1}, we have proved that there are infinitely many curves with torsion $\Z /2\Z \oplus \Z /4\Z$ over $K$.

 As we have already proved that there are infinitely many elliptic curves with odd torsion over $K$, from \cite[Lemma 5.5]{mr} we conclude that each of these curves has a twist with trivial torsion. Thus there are infinitely many elliptic curves with trivial torsion over $K$.
\end{proof}

\newtheorem{ttm}[tm]{Theorem}
\begin{ttm} Suppose $T(K_1)=\emptyset$. Then the torsion of an elliptic curve over $K_1$ is isomorphic to one of the groups
$$\Z/n\Z,\ n=1,\ldots 10, 12,$$
$$\Z/2\Z\oplus \Z/2n\Z,\ n=1,\ldots 4, 6.$$
All of the groups actually appear as a torsion of infinitely many curves over $K_1$.
\label{t1}
\end{ttm}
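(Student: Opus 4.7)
The plan is to use the hypothesis $T(K_1)=\emptyset$ to reduce the theorem to two subproblems: showing that every group in the displayed list appears as the torsion of infinitely many pairwise non-isomorphic curves over $K_1$, and showing that no group in $S$ outside that list can be realized at all. Theorem \ref{lem3} already handles every group from Mazur's list, so the positive part reduces to the single group $\Z/2\Z\oplus\Z/12\Z$, while the negative part amounts to excluding $\Z/n\Z$ for $n\in\{11,13,14,15,16,18,20\}$ together with $\Z/2\Z\oplus\Z/10\Z$ and $\Z/2\Z\oplus\Z/14\Z$.

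For the positive direction, Lemma \ref{lem1} shows that $\Z/2\Z\oplus\Z/12\Z$ cannot arise as the torsion of a base change to $K_1$ of a rational elliptic curve, so I would work directly on Rabarison's model of $X_1(2,12)$, a genus-$1$ curve with a rational cusp that serves as origin and makes it an elliptic curve over $\Q$. I would compute its Mordell--Weil rank over $K_1$ in MAGMA; a positive rank would immediately produce infinitely many non-cuspidal $K_1$-points, hence infinitely many non-isomorphic elliptic curves over $K_1$ containing $\Z/2\Z\oplus\Z/12\Z$ in their torsion. The hypothesis $T(K_1)=\emptyset$ together with the negative direction then forces the torsion to equal exactly $\Z/2\Z\oplus\Z/12\Z$ for all but finitely many of these points.

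For the negative direction the plan is, for each of the remaining groups, to show that the corresponding modular curve has no non-cuspidal cubic point defined over $K_1$. For $n\in\{11,14,15\}$ the curve $X_1(n)$ has genus $1$, and I would compute its Mordell--Weil group over $K_1$ and simply list the $K_1$-points. For $n\in\{13,16,18\}$ the curve $X_1(n)$ is hyperelliptic of genus $2$, and I would compute the rank of the Jacobian over $K_1$ and apply Chabauty--Coleman combined with a Mordell--Weil sieve. For $\Z/2\Z\oplus\Z/10\Z$ one may use the degree-$2$ map $X_1(2,10)\to X_1(10)$ to pull the question back to a curve whose arithmetic is far better understood.

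The main obstacle will be $n=20$ and $\Z/2\Z\oplus\Z/14\Z$, whose modular curves $X_1(20)$ and $X_1(2,14)$ are non-hyperelliptic of genus $3$ and $4$ respectively---precisely the cases flagged as unresolved in Problem (2). Since no systematic cubic-point enumeration is available, I would attempt to rule them out over $K_1$ by local obstructions. The primes $2$ and $3$ are inert in $K_1$ with residue fields $\mathbb F_8$ and $\mathbb F_{27}$, and the prime-to-$p$ injectivity of the reduction map combined with the Hasse bound forces the prime-to-$p$ part of $|E(\mathbb F_\mathfrak{p})|$ to be divisible by $20$ or $28$; for instance at the inert prime above $3$ the Hasse interval $[18,38]$ contains only a single such multiple in each case, pinning the Frobenius trace to a fixed value (in fact forcing supersingular reduction in the $\Z/2\Z\oplus\Z/14\Z$ case). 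Combining these constraints across several primes of $K_1$ should either yield an outright contradiction or reduce the problem to a short list of isogeny classes that can be eliminated by direct computation.
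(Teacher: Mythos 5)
Your overall decomposition matches the paper's: Theorem \ref{lem3} for the Mazur groups, direct Mordell--Weil computations over $K_1$ for the genus-one modular curves $X_1(11)$, $X_1(14)$, $X_1(15)$, $X_1(2,10)$, $X_1(2,12)$ (finding positive rank only for $X_1(2,12)$), and Jacobian computations for the genus-two curves $X_1(13)$, $X_1(16)$, $X_1(18)$. For the latter the paper finds that the Jacobians have rank $0$ over $K_1$ (with torsion $\Z/19\Z$, $\Z/2\Z\oplus\Z/10\Z$, $\Z/21\Z$ respectively) and simply enumerates the fibers of $X_1\hookrightarrow J_1$ over the finitely many rational points; your Chabauty--Coleman plus Mordell--Weil sieve is unnecessary machinery once the rank is $0$, though it would of course also succeed. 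Your suggestion to pass from $X_1(2,10)$ to $X_1(10)$ buys nothing, since $X_1(10)$ has genus $0$; the curve $X_1(2,10)$ is itself elliptic and is treated directly. Also, your parenthetical claim that Lemma \ref{lem1} rules out $\Z/2\Z\oplus\Z/12\Z$ arising by base change from $\Q$ is not what the lemma says (a rational curve with torsion $\Z/2\Z\oplus\Z/4\Z$ could a priori gain $3$-torsion over a cubic field), but this plays no role in your argument.

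The genuine gap is your treatment of $\Z/20\Z$ and $\Z/2\Z\oplus\Z/14\Z$. For the second group no new work is needed at all: any curve with torsion $\Z/2\Z\oplus\Z/14\Z$ has a point of order $14$, and you have already shown $Y_1(14)(K_1)=\emptyset$, so this case is immediate. For $\Z/20\Z$ the paper sidesteps the genus-$3$ curve $X_1(20)$ by descending to $X_0(20)$, which is an elliptic curve with model $y^2=(x+1)(x^2+4)$; one computes $X_0(20)(K_1)\simeq\Z/6\Z$ and, since $X_0(20)$ has exactly $6$ rational cusps, concludes that there is no $20$-cycle over $K_1$ at all, a fortiori no point of order $20$. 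Your proposed local-obstruction argument does not close this case: at the inert prime above $3$ the forced value $|E(\F_{27})|=20$ gives trace $8$, which is an admissible ordinary trace, so there is no contradiction; the argument also silently assumes good reduction at $2$ and $3$, which a curve with a $20$-torsion point need not have; and you concede the method may only ``reduce the problem to a short list,'' which is not a proof. As written, the nonexistence of $20$-torsion over $K_1$ remains open in your argument.
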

\begin{proof}
By Theorem \ref{lem3}, all the torsion groups from Mazur's theorem appear infinitely often over $K_1$.

The modular curve $X_1(11)$ is an elliptic curve with an affine model $y^2 - y = x^3 - x^2$. We compute that $X_1(11)(K_1)$ has rank 0 and $X_1(11)(K_1)\simeq X_1(11)(\Q) \simeq \Z/5\Z$, so all the points $X_1(11)(K_1)$ correspond to cusps, implying the non-existence of an elliptic curve with 11-torsion over $K_1$.

The modular curve $X_1(14)$ is an elliptic curve with an affine model $y^2 +xy+ y = x^3 - x$. We compute that $X_1(14)(K_1)$ has rank 0 and $X_1(14)(K_1)\simeq X_1(14)(\Q) \simeq \Z/6\Z$, so all the points $X_1(11)(K_1)$ correspond to cusps, implying the non-existence of an elliptic curve with 14-torsion over $K_1$.

The modular curve $X_1(15)$ is an elliptic curve with an affine model $y^2 +xy+ y = x^3+x^2$.  We compute that $X_1(15)(K_1)\simeq X_1(15)(\Q) \simeq \Z/4\Z$, the points on $X_1(15)(K_1)$ again being cusps, thus proving the non-existence of 15-torsion over $K_1$.

The modular curve $X_1(2,10)$ is an elliptic curve with an affine model $y^2 =x^3+x^2-x$.  We compute that $X_1(2,10)(K_1)\simeq X_1(2,10)(\Q) \simeq \Z/6\Z$, the points on $X_1(2,10)(K_1)$ being cusps, implying that there does not exist an elliptic curve with torsion $\Z/2\Z\oplus \Z/10\Z$ over $K_1$.

The modular curve $X_1(2,12)$ is an elliptic curve with an affine model $y^2 =x^3-x^2+x$. We compute $X_1(2,12)(K_1)\simeq \Z/4\Z \oplus \Z$, where $(2\alpha_1^2 - 1  ,2\alpha_1^2 - 2\alpha_1 - 3)$ is an element of infinite order. Since each point on $X_1(2,12)(K_1)$ corresponds to an isomorphism class of elliptic curves with torsion $\Z/2\Z\oplus \Z/12\Z$ over $K_1$, it is easy to see that there are infinitely many curves with this torsion.

The modular curve $X_1(13)$ is a hyperelliptic curve of genus 2 having an affine model $y^2=x^6-2x^5+x^4-2x^3+6x^2-4x+1$. Denote by $J_1(N)$ the Jacobian of $X_1(N)$. We embed the curve into its Jacobian
$J_1(13)(K_1)$, and compute that $J_1(13)(K_1) \simeq \Z/19\Z$. The rank of the Jacobian can be computed in MAGMA, while for the computation of the torsion, we use the MAGMA code of S. Siksek that can be found at \url{http://www.warwick.ac.uk/staff/S.Siksek/progs/chabnf/g2-jac.m} (although we could manage to compute the torsion by examining the Jacobian over finite fields, as in \cite{Naj2}). We then proceed to compute the fiber of the map $X_1(13)(K_1)\rightarrow J_1(13)(K_1)$ and thus find that all the points on $X_1(13)(K_1)$ are cusps.

The modular curve $X_1(16)$ is a hyperelliptic curve of genus 2 having an affine model $y^2=x^5 + 2x^4 + 2x^2 - x$. In a similar way as above we compute $J_1(16)(K_1) \simeq \Z/2\Z\oplus\Z/10\Z$, and find that all the points on $X_1(16)(K_1)$ are cusps.

The modular curve $X_1(18)$ is a hyperelliptic curve of genus 2 having an affine model $y^2=x^6+2x^5+5x^4+10x^3+10x^2+4x+1$. In a similar way as above we compute $J_1(18)(K_1) \simeq \Z/21\Z$, and find that all points on $X_1(18)(K_1)$ are cusps.

As there is no 14-torsion over $K_1$, there obviously does not exist a curve with torsion $\Z/2\Z\oplus\Z/14\Z$.

It remains to prove that there is no $20$-torsion. As we mentioned in the introduction, the methods used above are hard to use on $X_1(20)$ as it is a non-hyperelliptic curve of genus 3. We will instead work with $X_0(20)$, and prove a stronger statement, i.e. that there is no 20-cycle over $K_1$. The modular curve $X_0(20)$ is an elliptic curve with an affine model $y^2=(x+1)(x^2+4)$ (see \cite{Yang}), and we compute $X_0(20)(K_1)\simeq X_0(20)(\Q)\simeq \Z/6\Z$. It is  known (see \cite{ogg}) that $X_0(4p)$ has 6 rational cusps when $p$ is a prime, so we conclude that all the points on $X_0(20)(K_1)$ are cusps.
\end{proof}

In general, for any cubic field $K$, it is known that there are no points of order 55 \cite[Remark (2.3)]{km}, 27 or 64 \cite[Theorem B]{mom} over $K$.

\textbf{Remark 1.} One could also easily prove that there is also no 21-torsion over $K_1$ in a similar same way as it was proven that there is no 20-torsion. One finds that $X_0(21)(K_1)=X_0(21)(\Q)$, having 4 noncuspidal points, corresponding to the 4 rational curves having a 21-cycle, and none of them having $21$-torsion. Unfortunately, this is all we can say about $T(K_1)$.

\newtheorem{tm2}[tm]{Theorem}
\begin{tm2} Suppose $T(K_4)=\emptyset$. Then the torsion of an elliptic curve over $K_4$ is isomorphic to one of the groups
$$\Z/n\Z,\ n=1,\ldots 10, 12,13,14,18 $$
$$\Z/2\Z\oplus \Z/2n\Z,\ n=1,\ldots 4.$$
The groups that appear over $\Q$ appear infinitely many times as torsion, while all the other groups appear only finitely many times.
\label{t2}
\end{tm2}
\begin{proof}
We first compute in a similar manner as in the proof of Theorem \ref{t1} that $Y_1(11)(K_4)$ is empty.

Next we find that the rank of $X_1(14)(K_4)$ is 0 and the torsion is $\Z/18\Z$. These 18 points comprise all the cusps of $X_1(14)$ (6 rational over $\Q$ and 6 rational over $K_4$), and the 6 noncuspidal points generate exactly two curves (if $P$ is a point of order 14 on $E$, then $(E,\pm P)$, $(E,\pm 3P)$ and $(E,\pm 5P)$ are three distinct points on $X_1(14)$ corresponding to the same curve $E$), which are
\begin{equation}y^2 + \frac{9\alpha_4^2 - 13\alpha_4 + 1}{7}xy + \frac{8\alpha_4^2 - 4\alpha_4 - 19}{7}y = x^3 + \frac{8\alpha_4^2 - 4\alpha_4 - 19}{7}x^2 \label{14t}\end{equation}
and
\begin{equation}
y^2 + \frac{3\alpha_4^2 + 5\alpha_4 + 5}{7}xy + \frac{8\alpha_4^2 + 7\alpha_4 -
   4}{7}y = x^3 + \frac{8\alpha_4^2 + 7\alpha_4 - 4}{7}x^2,
\label{14t2}
\end{equation}
both with the 14-torsion point $(0,0)$. The curves (\ref{14t}) and (\ref{14t2}) have $j$-invariants $255^3$ and $-15^3$ respectively, and both are CM curves. Note that $X_1(14)$ is itself an elliptic curve, so we have simultaneously proved that there also exists an elliptic curve with 18-torsion over $K_4$! As the curves above are the only ones with 14-torsion, by checking that they do not have another 2-torsion point, we prove that there are no curves over $K_4$ with torsion $\Z/2\Z\oplus \Z/14\Z$. Both curves have rank 0, so over $K_4$  there are no elliptic curves having $\Z/14\Z\oplus \Z$ as a subgroup.

One finds in exactly the same way as in Theorem \ref{t1} that there are no elliptic curves over $K_4$ with torsion subgroups $\Z/15\Z$, $\Z/16\Z$, $\Z/2\Z\oplus \Z/10\Z$, $\Z/2\Z\oplus \Z/12\Z$.

As $X_1(4N)$ is a cover of $X_1(2,2N)$, and $Y_1(2,10)(K_4)=\emptyset$, one sees that $Y_1(20)(K_4)=\emptyset$.

We find noncuspidal points on $X_1(13)$ and construct from one of them the curve (which has rational $j$-invariant)
$$
y^2 + (4\alpha_4^2 - 2\alpha_4 - 8)xy + (20\alpha_4^2 - 11\alpha_4 - 45)y =
x^3 + (20\alpha_4^2 - 11\alpha_4 - 45)x^2,
$$
with the point $(0,0)$ of order 13.

Note that since $X_1(13)$ and $X_1(18)$ are curves of genus 2, by Faltings' theorem there are only finitely many points on them over $K_4$ and hence only finitely many elliptic curves with 13-torsion and 18-torsion.
\end{proof}

We can say more about $T(K_4)$ than we did for $T(K_1)$.

\newtheorem{tm3}[tm]{Proposition}
\begin{tm3}
The groups $\Z/21\Z$, $\Z/24\Z$, $\Z/28\Z$, $\Z/32\Z$, $\Z/35\Z$, $\Z/36\Z$, $\Z/49\Z, \Z/52\Z, \Z/2\Z \oplus \Z/26\Z$ are not in $T(K_4)$.
\label{t3}
\end{tm3}
\begin{proof}
One proves the non-existence of 21-torsion exactly the same way as over $K_1$. The non-existence of 24-torsion follows from $Y_1(2,12)=\emptyset$.

As proven in Theorem \ref{t2}, there are only two curves having $\Z/14\Z$ as a subgroup, and one easily checks that neither of them contains $\Z/28\Z$.

The non-existence of $N$-torsion for $N=32,36$ and $49$ is proven by showing that the elliptic curve $X_0(N)$ has rank 0 and checking that all the torsion points are actually cusps. For all the curves except $X_0(32)$, MAGMA easily computes the rank using 2-descent. For $X_0(32)$ it only gives an upper bound of 2. However we can compute that the analytic rank is equal to 0, and since $K_4$ is a totally real field of odd degree, \cite{zha} implies that the algebraic rank is also equal to 0.

Since $X_0(35)$ is a genus 3 (hyperelliptic) curve, it is better not to work with  $X_0(35)$ directly. Instead, we can redo the proof of Kubert \cite[Proposition IV.3.5.]{kub} over $K_4$. We work with $X_0(35)/w_5$, where $w_5$ is an Atkin-Lehner involution. There is a nice model of $X_0(35)/w_5$:
$$E:y^2+y=x^3+x^2+9x+1.$$
We compute that $E(K_4)=E(\Q)\simeq \Z/3\Z$, and as already shown in \cite{kub}, these 3 points come from cusps of $X_0(35)$, thus proving that there is no 35-cycle over $K_4$.

 By \cite{bcdt} for a rational elliptic curve $E$ of conductor $N$ there exists a morphism
$$X_0(N)\longrightarrow E$$
called \emph{modular parametrization}. To eliminate $\Z/52\Z$ and $\Z/2\Z \oplus \Z/26\Z$ we more or less follow the strategy of the proof of \cite[Theorem 4.2]{jm}. We easily compute that for the elliptic curve
$$ E: y^2 = x^3 + x - 10$$
with conductor 52 it holds $E(K_4)\simeq E(\Q) \simeq \Z/2\Z$ and that the degree of the modular parametrization is 3. Thus there are at most 6 points on $X_0(52)(K_4)$, but (see \cite{ogg}) $X_0(52)$ has 6 rational cusps, so $Y_0(52)(K_4)=\emptyset$, implying $\Z/52\Z, \Z/2\Z \oplus \Z/26\Z\not\in T(K_4)$.

\end{proof}

\textbf{Remark 2.} One can prove that if $\Z/25\Z\not\in T(K_7)$, then $T(K_7)=\emptyset$ in the following way. First, one uses the same methods as in the proofs of Theorems \ref{t1} and \ref{t2} to prove that there are no elliptic curves with points of order 11, 13, 14, 15, 16, 18 and that no curve contains $\Z/2\Z \oplus \Z/12\Z$ as a subgroup. We find that there are infinitely many curves containing $\Z/2\Z \oplus \Z/10\Z$ and are unfortunately unable to determine whether there exists a curve with 20-torsion. The non-existence of an elliptic curve having a point of order 21, 24, 35 or 49 is proved in exactly the same way as in the proof of Proposition \ref{t3}. Finally, we use the modular parametrization (of degree 4) $X_0(40)\rightarrow E$, where $E$ is the elliptic curve $y^2=x^3-107x-426$ with $E(K_7)=E(\Q)\simeq \Z /2\Z $, to prove the non-existence of a 40-cycle, ruling out $\Z/40\Z$ and $\Z/2\Z \oplus \Z/20\Z$ as possible subgroups.
Note that all other non-cyclic torsion groups are ruled out by the Weil pairing.

 For any cubic field $K$ it seems hardest to rule out the existence of 25-torsion in $T(K)$, as $X_0(25)$ has genus 0 (and infinitely many points) and $X_1(25)$ has genus 12. There is an intermediate curve $B$
$$X_1(25)\xrightarrow{ \ \ \ 2\ \ \ }B\xrightarrow{ \ \ \ 5\ \ \ }X_0(25)$$
(where the numbers above the arrows denote the degrees of the maps), which was used by Kubert \cite{kub} to prove the non-existence of rational 25-torsion and by Kenku \cite{ken} to show that there is no 25-torsion over quadratic fields. However this is still a curve of genus 4, and is thus not suited for the methods used in this paper.

\section{Cubic fields with smallest discriminant with a given torsion group appearing over them}

In this section we will find, for a given group $G\in S- \{\Z/20\Z, \Z/2\Z \oplus \Z/14\Z \}$ the cubic field $K$ with smallest absolute value of its discriminant such that $G$ appears as torsion of some elliptic curve $E(K)$. We will follow the same general strategy as Kamienny and the author \cite{kn}, by examining fields by ascending $|\Delta(K)|$ and for each field either finding an elliptic curve with given torsion or proving the non-existence of such a curve. For a group $G\in S,$ we denote that field  by $M(G)$.

\newtheorem{tm4}[tm]{Proposition}
\begin{tm4}
$M(\Z/2\Z \oplus \Z/12\Z)=K_1$.
\label{t4}
\end{tm4}
\begin{proof}
This follows directly from Theorem \ref{t1}.
\end{proof}

\newtheorem{tm5}[tm]{Proposition}
\begin{tm5}
$M(\Z/14\Z)=M(\Z/2\Z \oplus \Z/10\Z)=K_2$.
\label{t5}
\end{tm5}
\begin{proof}
From Theorem \ref{t1}, we see that $\Z/14\Z$ and $\Z/2\Z \oplus \Z/10\Z$ do not appear as torsion groups over $K_1$. We compute $rank(X_1(14)(K_2))=1$, and find the elliptic curve
$$y^2 + \frac{13\alpha_2^2 + 8\alpha_2 + 29}{9}xy + \frac{-35\alpha_2^2 -
    25\alpha_2 - 49}{27}y = x^3 + \frac{-35\alpha_2^2 -
    25\alpha_2 - 49}{27}x^2$$
with the point $(0,0)$ having order 14.

In a similar way we compute $rank(X_1(2,10)(K_2))=1$ and find an elliptic curve with
torsion $\Z/2\Z\oplus\Z/10\Z$, for example
$$y^2 + \frac{-\alpha_2^2 - 9\alpha_2 + 6}{11}xy + \frac{-84\alpha_2^2 -
    52\alpha_2 - 145}{121}y = x^3 + \frac{-84\alpha_2^2 -
    52\alpha_2 - 145}{121}x^2.$$
\end{proof}

\newtheorem{tm6}[tm]{Proposition}
\begin{tm6}
$M(\Z/11\Z)=M(\Z/13\Z)=M(\Z/15\Z)=K_3$.
\label{t6}
\end{tm6}
\begin{proof}
One easily computes that $X_1(11)(K_i)\simeq \Z/5\Z$ has only cusps for $i=1,2$ and $X_1(11)(K_3)\simeq \Z/10\Z$ (the rank of $X_1(11)$ is 0 over $K_3$). We obtain that there is exactly one (up to isomorphism) elliptic curve with 11-torsion over $K_3$, this curve being
\begin{equation}y^2 + (3\alpha_3^2 - 5\alpha_3 - 3)xy + (8\alpha_3^2 + 8\alpha_3 + 2)y =
x^3 + (2\alpha_3^2 - 10\alpha_3 - 6)x^2\label{11t}\end{equation}
Note that this curve has rank 0.

One can compute that $Y_1(15)(K_i)=\emptyset$ for $i=1,2$ and that the rank of $X_1(15)(K_3)$ is 1. We find an elliptic curve
$$y^2 + (\alpha_3^2 + \alpha_3 + 5)xy + (8\alpha_3^2 - 4\alpha_3 + 4)y = x^3 + (2\alpha_3^2 - \alpha_3 + 1)x^2$$
with 15-torsion.

We compute that $rank(J_1(13))=0$ over the fields with smaller discriminant. Then it is easy to check that there are no elliptic curves with 13-torsion over those fields. Over $K_3$ we find that $J_1(13)$ has positive rank, and by a simple search we find points on $Y_1(13)$, obtaining the elliptic curve
$$y^2 + (-\alpha_3^2 + 2)xy + (\alpha_3^2 - 2\alpha_3 + 1)y = x^3 + (\alpha_3^2
    - 2\alpha_3 + 1)x^2.$$
\end{proof}

\newtheorem{tm7}[tm]{Proposition}
\begin{tm7}
$M(\Z/18\Z)=K_4$.
\label{t7}
\end{tm7}
\begin{proof}
One computes that the rank of the $J_1(18)(K_i)$ is 0 and $Y_1(18)(K_i)=\emptyset$ for $i<4$.
In Theorem \ref{t2}, it was already proved that 18-torsion appears over $K_4$.
\end{proof}

\newtheorem{tm8}[tm]{Proposition}
\begin{tm8}
$M(\Z/16\Z)=K_5$.
\label{t8}
\end{tm8}
\begin{proof}
This is proved exactly as the previous proposition. We give a curve with 16-torsion over $K_5$:

$$ y^2 + (-\alpha_5 + 2)xy + (-\alpha_5^2 - 2\alpha_5 + 1)y = x^3 + (-\alpha_5^2
    - 2\alpha_5 + 1)x^2.$$
\end{proof}

\newtheorem{tm9}[tm]{Proposition}
\begin{tm9}
Let $M(\Z/20\Z)=L_1$ and $M(\Z/2\Z\oplus \Z/14\Z)=L_2$. Then $|\Delta(L_1)|\leq 316$ and $|\Delta(L_2)|\leq 961$.
\label{t9}
\end{tm9}
\begin{proof}
One finds an elliptic curve with 20-torsion over $K_{40}$, the 40-th field in the table \cite{jj}, generated by the polynomial $x^3 - x^2 - 4x + 2$ by putting $t=2$ into \cite[Table 1, $N=20$]{jky}. Note that the polynomials $4x^3+8x^2+x-2$ obtained from \cite[Table 1, $N=20$]{jky} and $x^3 - x^2 - 4x + 2$ (from \cite{jj}) differ but they generate the same field. The field $K_{40}$ is a $S_3$ cubic field.

We find an elliptic curve with torsion $\Z/2\Z\oplus \Z/14\Z$ over $K_{143}$, the $143$-th field from \cite{jj}, generated by $x^3 - x^2 - 10x + 8$, using \cite[Example 4.3]{jky}. The polynomials from \cite{jj} and \cite[Example 4.3]{jky} differ, but they generate the same field. The field $K_{143}$ is a cyclic cubic field.
\end{proof}

\section{The number of elliptic curves with given torsion over a number field}

In this section we examine the number (up to isomorphism) of elliptic curves over cubic number fields. We will ignore the groups that appear as torsion already over $\Q$ as by Theorem \ref{lem3} they appear as torsion over every cubic field infinitely many times.

The same problem (again ignoring the groups from Mazur's theorem) for quadratic fields was dealt with in \cite{kn}. It was shown that for some groups, if there is one curve with given torsion there are infinitely many, for some groups there are always finitely many, while for some groups, over some quadratic fields there will be finitely many, and over others infinitely many.

We obtain the following result for cubic fields.

\newtheorem{tm10}[tm]{Theorem}
\begin{tm10}
a) There are only finitely many elliptic curves with torsion $\Z/13\Z$, $\Z/16\Z$, $\Z/18\Z$, $\Z/20\Z$ and $\Z/2\Z \oplus \Z/14\Z$ over any fixed cubic field.\\
b) Over any cubic field there is either no curve with torsion $\Z/15\Z$, $\Z/2\Z \oplus \Z/10\Z$ and $\Z/2\Z \oplus \Z/12\Z$ or there are infinitely many.\\
c) There is exactly one curve with 11-torsion over $K_3$. This is the curve (\ref{11t}). Over any other cubic field there are either no elliptic curves with 11-torsion or there are infinitely many.\\
d) There are exactly two curves with 14-torsion over $K_4$. These are the curves (\ref{14t}) and (\ref{14t2}). Over any other cubic field there are either no or infinitely many such curves.\\
e) For each of the groups $\Z/11\Z$, $\Z/14\Z$, $\Z/15\Z$ and $\Z/2\Z \oplus \Z/12\Z$ there exist infinitely many cubic fields with Galois group $S_3$ over which there exist infinitely many elliptic curves with that torsion. There exist infinitely many totally real cubic fields over which there exist infinitely many elliptic curves with torsion $\Z/2\Z \oplus \Z/10\Z$.
\label{t10}
\end{tm10}
\begin{proof}
a) Since all the modular curves parameterizing these torsions are of genus $\geq 2$, the result follows trivially from Faltings' theorem. Actually, the statement is true over a number field of any degree.

b) The only way it is possible that there is a finite number of curves with a torsion group parameterized by a modular curve $X_1$ of genus 1 is that over some cubic field the torsion of $X_1(K)$ is larger than $X_1(\Q)$ and that $rank(X_1(K))=0$. We can find the possible candidates for this by examining the division polynomials of $X_1$.

By \cite{Par1,Par2} the only primes that can divide the order of the torsion are the primes up to and including 13.

We start with $X_1(15)$. Note that $X_1(15)(\Q)\simeq \Z/4\Z$. We easily compute that $\psi_n(X_1(15))$ for $n=3,5,7,11,13$ are all irreducible of degree larger than 3, and hence $\psi_n(X_1(15))$ will not have a root over any cubic field. By Lemma \ref{lem1} the 2-Sylow subgroups of $X_1(15)(\Q)$ and $X_1(15)(K)$ are equal for all cubic fields $K$.

We compute $X_1(2,10)(\Q)\simeq \Z/6\Z$ and $\psi_n(X_1(2,10))$ for $n=5,7,11,13$ are all irreducible of degree larger than 3, and hence $\psi_n(X_1(2,10))$ will not have a root over any cubic field. By Lemma \ref{lem1} there are no additional cubic points of order $2^n$. The polynomial $\psi_9(X_1(2,10))$ factors as $(x-1)(x^3 + 7/3x^2 + 1/3x + 1/3)f_9f_{27}$, where $f_9$ and $f_{27}$ are irreducible polynomials of degree 9 and 27. The points with $x=1$ correspond to the rational points of order 3. We see that there is a factor of degree 3 in the factorization implying that the $x$-coordinate of an additional element of order dividing 9 satisfies $x^3 + 7/3x^2 + 1/3x + 1/3$. We examine $X_1(2,10)(\Q(\alpha)),$ where $\alpha$ is the root of $x^3 + 7/3x^2 + 1/3x + 1/3$. We check that the torsion of $X_1(2,10)(\Q(\alpha))$ is still $\Z/6\Z$. This is because the $y$-coordinate of the point corresponding to this polynomial is not defined over $\Q(\alpha)$. It is in fact defined over $\Q(\alpha, \sqrt{-3})$, over which $X_1(2,10)$ has torsion $\Z/3\Z\oplus \Z/6\Z$. We conclude that there are no noncuspidal cubic torsion points on $X_1(2,10)$ that are not defined over $\Q$.

We compute $X_1(2,12)(\Q)\simeq \Z/4\Z$ and $\psi_n(X_1(2,12))$ for $n=3,5,7,11,13$ are all irreducible of degree larger than 3, and hence $\psi_n(X_1(2,12))$ will not have a root over any cubic field. By Lemma \ref{lem1} there are no cubic points of even order apart from the ones defined over $\Q$.

c) From the short Weierstrass form we see that $K_3$ is the only cubic field over which $X_1(11)$ has a 2-torsion point. Over any other cubic field, $X_1$ has odd torsion. As $X_1(11)$ has good reduction at (a prime over) 2, its whole torsion subgroup embeds into the residue field of (a prime over) 2. By the Hasse-Weil bound, an elliptic curve can have at most 13 points over that field, so only $\Z /5\Z\ (\simeq X_1(11)(\Q))$ is possible.

d) We compute $X_1(14)(\Q)\simeq \Z/6\Z$ and $\psi_n(X_1(14))$ for $n=5,7,11,13$ are all irreducible of degree larger than 3, and hence $\psi_n(X_1(14))$ will not have a root over any cubic field. By Lemma \ref{lem1} there are no additional cubic $2^n$-torsion points. The polynomial $\psi_9(X_1(14))$ factors as $x( x^3 - 2x^2 - x + 1)(x^3 + x^2/3 - x + 1)f_6f_{27}$, where $f_6$ and $f_{27}$ are irreducible polynomials of degree 6 and 27. Note that the first polynomial generates $K_4$ and, as shown in Theorem \ref{t2}, $X_1(14)(K_4)\simeq \Z/18\Z$ and the additional torsion generates just the curves (\ref{14t}) and (\ref{14t2}). Let $K$ be the field generated by $x^3 + x^2/3 - x + 1$. We check that although the $x$-coordinate of additional torsion points is defined over $K$, the $y$-coordinate is not. Actually, the situation is similar to $X_1(2,10)$, i.e. $X_1(14)(K(\sqrt{-3}))\simeq \Z/3\Z\oplus \Z/6\Z$.

e) We use the proof of \cite[Lemma 3.3. (b)]{jks}, where it is proved that if an elliptic curve $E$ is written in the form $Y^2=X^3+Ax+B$, $A,B \in \Z$, then one can construct an infinite sequence of primes $p_i$ such that the roots $\xi_i$ of $X^3+Ax+B-p_i^{-20}$ generate distinct cubic fields $\Q(\xi_i)$, such that $E(\Q(\xi_i))$ has positive rank. For the modular curve $X_1(2,10)$, the discriminant of $X^3+Ax+B$ is positive, so for all but finitely many $p_i$, the discriminant of $X^3+Ax+B-p_i^{-20}$ will also be positive. Hence $X_1(2,10)$ will have positive rank over infinitely many totally real cubic fields. On the other hand, for the remaining modular curves of genus 1, the discriminant of $X^3+Ax+B$ is negative, implying that for all but finitely many $p_i$, the discriminant of $X^3+Ax+B-p_i^{-20}$ will also be negative. Hence these modular curves will have positive rank over infinitely many cubic fields with Galois group $S_3$.

\end{proof}

\textbf{Acknowledgements.}
The author was supported by the National Foundation for Science, Higher Education and Technological Development of the Republic of Croatia. We are grateful to Andrej Dujella and Peter Stevenhagen for their helpful comments. We are greatly indebted to the anonymous referee whose numerous comments significantly improved this paper both in presentation and mathematical content.

\vspace{1cm}

\small{MATHEMATISCH INSTITUUT, P.O. BOX 9512, 2300 RA LEIDEN, THE NETHERLANDS}\\

\emph{E-mail:} fnajman@math.leidenuniv.nl\\

AND\\

\small{DEPARTMENT OF MATHEMATICS, UNIVERSITY OF ZAGREB, BIJENI\v CKA CESTA 30, 10000 ZAGREB, CROATIA}\\

\emph{E-mail:} fnajman@math.hr

\end{document}